\newtheorem{theorem}{Theorem}[section]
\newtheorem{lemma}[theorem]{Lemma}
\theoremstyle{definition}
\newtheorem{definition}[theorem]{Definition}
\newtheorem{example}[theorem]{Example}
\theoremstyle{remark}
\numberwithin{equation}{section}
\def\a{{\alpha}}
\def\b{{\beta}}
\def\g{{\gamma}}
\def\ep{{\eta}}
\def\z{{\zeta}}
\def\f{{\varphi}}
\def\na{{\nabla}}
\def\p{{\psi}}
\def\G{{\Gamma}}
\def\O{{\Omega}}
\def\D{{\Delta}}
\def\ff{{{\mathcal F}}}
\def\gg{{{\mathcal G}}}
\def\xxx{{{\bf x}}}
\def\JJJ{{{\bf J}}}
\def\qqq{{{\bf q}}}
\def\RN{{{\bf R}^N}}
\def\9{{\ \hbox{in}\ \O}}
\def\1{{\ \hbox{on}\ \G_1}}
\def\2{{\ \hbox{on}\ \G_2}}
\def\3{{\ \hbox{on}\ \G_3}}
\def\0{{\ \hbox{on}\ \G}}
\def\pa{{\partial}}
\begin{document}

\title[An application of a theorem of G. Zwirner]{An application of a theorem of G. Zwirner to a class of non-linear elliptic systems in divergence form}
\author{Giovanni Cimatti}
\address{Department of Mathematics, Largo Bruno
  Pontecorvo 5, 56127 Pisa Italy}
\email{cimatti@dm.unipi.it}


\subjclass[2010]{34L99, 35J66}



\keywords{Existence and uniqueness, two-point problem for O.D.E., systems of P.D.E in divergence form. }

\begin{abstract}
A theorem on the solutions of the problem $U'(w)=\g F(U(w),w),\quad U(w_1)=u_1,\ U(w_2)=u_2$ is applied for finding the functional solutions of the system of partial differential equations
\begin{equation*}
\na\cdot(a(u,w)\na u)=0,\ u=u_1\1,\quad u=u_2\2,\quad\frac{\pa u}{\pa n}=0\3
\end{equation*} 
\begin{equation*}
\na\cdot(b(u,w)\na w)=0,\ w=w_1\1,\quad w=w_2\2,\quad\frac{\pa w}{\pa n}=0\3.
\end{equation*}
The problem of existence and uniqueness of solutions is considered. 
\end{abstract}

\maketitle

\section{Introduction}
The problem of finding the solutions of the ordinary differential equation
\begin{equation}
\label{1_1}
U'(w)=\g F(U(w),w)
\end{equation}
which satisfy the two conditions

\begin{equation}
\label{2_1}
U(w_1)=u_1,\quad U(w_2)=u_2,\quad w_2>w_1
\end{equation}
was the object of several papers mainly of the Italian  and Japanese school. We quote in particular \cite{HN}, \cite{Z}, \cite{C}, \cite{S}, \cite{ZW}, \cite{S1}, \cite{ZW1}. In this paper we show that the theorem given by G. Zwirner in \cite{ZW} on the existence and uniqueness for problem (\ref{1_1}), (\ref{2_1}) can be used to find a class of solutions, physically relevant, of the boundary value problem

\begin{equation}
\label{3_1}
\na\cdot(a(u,w)\na u)=0\quad\9
\end{equation} 

\begin{equation}
\label{4_1}
u=u_1\1,\quad u=u_2\2,\quad\frac{\pa u}{\pa n}=0\3
\end{equation} 

\begin{equation}
\label{5_1}
\na\cdot(b(u,w)\na w)=0\quad\9
\end{equation} 

\begin{equation}
\label{6_1}
w=w_1\1,\quad w=w_2\2,\quad\frac{\pa w}{\pa n}=0\3,\quad w_2>w_1
\end{equation} 
where $\O$ is an open and bounded subset of $\RN$ with boundary $\G$ divided into three parts $\G_1$, $\G_2$ and $\G_3$. $u_1$, $u_2$ are arbitrary constants, whereas $w_1$, $w_2$ are constants with the restriction $w_2>w_1$\ \footnote{The assumption $w_2>w_1$, (or, more generally, $w_2\neq w_1$) is essential to make problem (\ref{1_1}), (\ref{2_1}) meaningful. On the other hand, if we assume $w_1=w_2=\bar w$ the problem (\ref{3_1})-(\ref{6_1}) is immediately uncoupled. In fact, from (\ref{5_1}) and (\ref{6_1}) we have $w(\xxx)=\bar w$, under the sole assumption $b(u,w)>0$. Substituting this value of $w$ in (\ref{3_1}), the problem (\ref{3_1})-(\ref{4_1}) can be solved using the Kirchhoff transformation}.

When $N=3$ the problem (\ref{3_1})-(\ref{6_1}) has a simple physical interpretation. For, let $u(\xxx)$, $\xxx\in\O$ represent the temperature and $w(\xxx)$ the concentration of a substance in a liquid at rest which occupies $\O$. Suppose that on $\G_1$ and $\G_2$ the temperature $u$ and the concentration $w$ are kept fixed at the two constant values $u_1$, $u_2$ and $w_1$, $w_2$ respectively, whereas $\G_3$ is the part of the boundary of $\O$ which is thermally insulated and impermeable to the substance dissolved in the fluid. By the Fourier's law we have for the density of heat flow $\qqq=-a(u,w)\na u$ and for the density of molecular mass flow  $\JJJ=-b(u,w)\na w.\ \footnote{In certain situations the dependence of $a$ and $b$ on $u$, $w$ can be quite strong.}$ In absence of sources of heat and mass we have $\na\cdot\qqq=0,\ \na\cdot\JJJ=0$ i.e. (\ref{3_1}) and (\ref{5_1}).

\section{Existence and uniqueness of functional solutions}

 We assume that the boundary of $\O$ has a degree of regularity which makes solvable the mixed problem

\begin{equation}
\label{1_3}
\D z=0,\quad z=0\ \1,\quad z=1\2\quad,\quad\frac{\pa z}{\pa n}=0\ \3.
\end{equation} 
We are interested in the functional solutions of problem (\ref{3_1})-(\ref{6_1}) according to the following

\begin{definition}
A classical solution $(u(\xxx),w(\xxx)$) of problem (\ref{3_1})-(\ref{6_1}) is termed functional if a function $U(t)\in C^1([w_1,w_2])$ exists such that $u(\xxx)=U(w(\xxx))$.
\end{definition}

\begin{example}
 Let us consider the special case of (\ref{3_1})-(\ref{6_1}) in which 

\begin{equation}
\label{2_4}
a(u,w)=b(u,w),\quad a(u,w)\geq a_0>0.
\end{equation} 
We claim that every classical solution $(u(\xxx),w(\xxx))$ of (\ref{3_1})-(\ref{6_1}) is a functional solution with respect to the function 

\begin{equation*}
U(t)=\a t+\b,\quad \a=\frac{u_2-u_1}{w_2-w_1},\quad \b=\frac{u_1w_2-w_1u_2}{w_2-w_1}.
\end{equation*} 
For, let $(u(\xxx),w(\xxx))$ be any solution of (\ref{3_1})-(\ref{6_1}) and define $\z(\xxx)=u(\xxx)-(\a w(\xxx)+\b)$. We have

\begin{equation}
\label{1_5}
\na\cdot(a(u,w)\na\z)=0\ \9,\quad\z=0\ \1,\ \z=0\2\quad,\ \frac{\pa \z}{\pa n}=0\ \3.
\end{equation} 
Multiplying (\ref{1_5}) by $\z$ and integrating by parts over $\O$ we have, in view of (\ref{2_4}), $\z(\xxx)=0$. Hence  $(u(\xxx),w(\xxx))$ is a functional solution since we have $u(\xxx)=U(w(\xxx))$. For other applications of the functional solutions of systems of partial differential equations in divergence form we refer to \cite{GC}and \cite{GC1}.
\end{example}
\vskip .1cm Associated with the problem (\ref{3_1})-(\ref{6_1}) we consider the two-point problem

\begin{equation}
\label{1_6}
U'(w)=\g \frac{b(U(w),w)}{a(U(w),w)}
\end{equation}

\begin{equation}
\label{2_6}
U(w_1)=u_1,\quad U(w_2)=u_2,\quad w_2>w_1.
\end{equation}
To this problem we can apply the following theorem (see \cite{ZW} for the proof).

\begin{theorem}
Let $F(U,w)$ be measurable with respect to $w$ and continuous with respect to $U$ in the rectangle $R=\{w_1\leq w\leq w_2,\ u_1\leq U\leq u_2\}$, $w_1<w_2$. Assume that there exist two functions $q(w),\ p(w)\in L^1(w_1,w_2)$ such that

\begin{equation*}
p(w)\leq F(U,w)\leq q(w)
\end{equation*}

\begin{equation*}
p(w)\geq 0,\quad \int_{w_1}^{w_2}p(t)dt>0.
\end{equation*}
Then the problem
\begin{equation}
\label{3_7}
 U'(w)=\g F(U(w),w),\quad U(w_1)=u_1,\quad U(w_2)=u_2,
\end{equation}
 in the unknown $\g$ (a real number) and $U(w)$, has at least one solution absolutely continuous in $[w_1,w_2]$. If $F(U,w)\in C^k( R)$ then $u(t)\in C^{k+1}([w_1,w_2])$. Moreover, if $F(U,w)$ satisfies a Lipschitz condition in $R$ with respect to $U$ the solution of (\ref{3_7}) is unique.\ \footnote{Other criteria which guarantee the uniqueness of the solution can be found in \cite{S1}.}
\end{theorem}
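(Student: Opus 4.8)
The plan is to establish existence by a continuity/shooting argument in the scalar parameter $\g$, and to treat the regularity and uniqueness claims separately by standard ODE theory. The essential difficulty is that $\g$ is itself an unknown: we must solve a two-point (boundary) problem for a first-order ODE, which is overdetermined unless we are allowed to tune $\g$. So the real content is a shooting-in-$\g$ argument, and the hypotheses $p(w)\geq 0$ and $\int_{w_1}^{w_2}p(t)\,dt>0$ are precisely what make the endpoint value a usable monotone function of $\g$.

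First I would fix $\g\in\R$ and consider the initial value problem $U'(w)=\g F(U(w),w)$, $U(w_1)=u_1$. Under the stated assumptions $F$ is measurable in $w$, continuous in $U$, and squeezed between the two $L^1$ functions $p$ and $q$, so a Carath\'eodory existence theorem produces an absolutely continuous solution $U(\cdot;\g)$ on $[w_1,w_2]$, at least as long as the solution stays inside the rectangle $R$. The a priori bounds $\g p(w)\leq U'(w)\leq \g q(w)$ (for $\g\geq 0$) give, by integration,
\begin{equation*}
u_1+\g\int_{w_1}^{w}p(t)\,dt\ \leq\ U(w;\g)\ \leq\ u_1+\g\int_{w_1}^{w}q(t)\,dt,
\end{equation*}
which both keeps the trajectory confined (for $\g$ in a suitable range) and pins down the monotone behaviour of the terminal value. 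Define $\Phi(\g)=U(w_2;\g)$. The lower bound shows $\Phi(\g)\geq u_1+\g\int_{w_1}^{w_2}p(t)\,dt$, so since the integral of $p$ is strictly positive, $\Phi(\g)\to+\infty$ as $\g\to+\infty$; and $\Phi(0)=u_1\leq u_2$. The goal is then to show $\Phi$ is continuous (and, ideally, that the trajectory does not escape $R$ before we reach the target), so that the intermediate value theorem yields some $\g^\ast$ with $\Phi(\g^\ast)=u_2$, giving a solution of the full problem \eqref{3_7}.

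The step I expect to be the main obstacle is continuity of $\Phi$ together with control of the solution inside $R$: because $F$ is only Carath\'eodory (merely continuous in $U$, not Lipschitz), solutions of the IVP need not be unique, so $\Phi$ may be multivalued and I would have to argue with the set of attainable endpoint values rather than a single function. The standard remedy is a compactness argument: the family $\{U(\cdot;\g)\}$ is, by the $L^1$ squeeze on $U'$, uniformly bounded and equi-absolutely-continuous, hence precompact in $C([w_1,w_2])$ by Arzel\`a--Ascoli; passing to limits in the integral form $U(w)=u_1+\g\int_{w_1}^{w}F(U(t),t)\,dt$ (using dominated convergence, justified by the bound $|F|\leq\max(|p|,|q|)\in L^1$ and continuity of $F$ in $U$) shows the attainable-endpoint set varies upper-semicontinuously and connectedly in $\g$. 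Combined with $\Phi(0)\leq u_2<\lim_{\g\to\infty}\Phi(\g)$, this furnishes the desired $\g^\ast$.

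For the two remaining assertions the work is routine and I would dispatch them quickly. If $F\in C^k(R)$, then along any solution the right-hand side $\g F(U(w),w)$ is $C^k$ in $w$ (bootstrapping: $U$ absolutely continuous $\Rightarrow U'$ continuous $\Rightarrow U\in C^1\Rightarrow U'\in C^1\Rightarrow\cdots$), so $U\in C^{k+1}([w_1,w_2])$. Finally, if $F$ is Lipschitz in $U$ uniformly in $w$, then for fixed $\g$ the IVP has a unique solution by Gr\"onwall, so $\Phi$ is genuinely single-valued and strictly increasing in $\g$ on the relevant range (the lower-bound estimate shows the terminal value increases by at least $\g\int p$), whence the value $\g^\ast$ solving $\Phi(\g^\ast)=u_2$ is unique and so is the corresponding $U$; this gives uniqueness of the pair $(\g,U)$ solving \eqref{3_7}.
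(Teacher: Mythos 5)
The paper itself contains no proof of this statement to compare against: it is Zwirner's theorem, quoted verbatim with the reader referred to \cite{ZW} for the proof. So your proposal has to be judged on its own merits, and as written it has two genuine gaps. The first is in the existence part: your shooting map $\Phi(\gamma)=U(w_2;\gamma)$ is simply not defined for large $\gamma$, because $F$ is given only on the rectangle $R$, and for large $\gamma$ the trajectory reaches the level $U=u_2$ strictly before $w=w_2$ and exits the domain of $F$. You acknowledge the confinement problem (``for $\gamma$ in a suitable range'') but then use $\Phi(\gamma)\geq u_1+\gamma\int_{w_1}^{w_2}p\,dt\to+\infty$, which presupposes that solutions are globally defined on $[w_1,w_2]$; the two statements are inconsistent. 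The repair is routine but must be stated: extend $F$ by $F(U,w):=F(u_2,w)$ for $U>u_2$ and $F(U,w):=F(u_1,w)$ for $U<u_1$; the extension is still Carath\'eodory, still squeezed between $p$ and $q$, and still Lipschitz in $U$ if $F$ was, and since $\gamma\geq0$ and $F\geq p\geq0$ make every solution nondecreasing, any solution of the extended problem whose terminal value is exactly $u_2$ stays in $[u_1,u_2]$ and therefore solves the original problem. Granting this, your compactness, closed-graph, and connectedness argument is correct, but be aware that connectedness of the attainable set (Kneser--Hukuhara) is a substantial theorem you are invoking, not something that ``passing to limits'' produces; also, the regularity bootstrap should be run on the integral equation, since absolute continuity by itself does not make $U'$ continuous.

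The second gap is more serious. For uniqueness you assert that $\Phi$ is strictly increasing ``because the lower-bound estimate shows the terminal value increases by at least $\gamma\int p$''. That estimate, $\Phi(\gamma)\geq u_1+\gamma\int_{w_1}^{w_2}p\,dt$, compares $\Phi(\gamma)$ only with $\Phi(0)=u_1$; it says nothing about $\Phi(\gamma_2)-\Phi(\gamma_1)$ for $0<\gamma_1<\gamma_2$, and a function can dominate an increasing lower bound without being monotone. The correct argument needs both hypotheses in tandem. For $\gamma_1<\gamma_2$, since $F\geq0$ we have $U_2'=\gamma_2F(U_2,w)\geq\gamma_1F(U_2,w)$, so $U_2$ is a supersolution of the $\gamma_1$-equation and Lipschitz comparison gives $\delta:=U_2-U_1\geq0$; then almost everywhere
\begin{equation*}
\delta'=(\gamma_2-\gamma_1)F(U_2,w)+\gamma_1\bigl(F(U_2,w)-F(U_1,w)\bigr)\geq(\gamma_2-\gamma_1)p(w)-\gamma_1L\,\delta,
\end{equation*}
and an integrating factor yields $\delta(w_2)\geq e^{-\gamma_1L(w_2-w_1)}(\gamma_2-\gamma_1)\int_{w_1}^{w_2}p\,dt>0$. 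This is exactly where $\int_{w_1}^{w_2}p\,dt>0$ earns its keep in the uniqueness claim; without this comparison step your uniqueness proof is an assertion. With it, $\Phi$ is strictly increasing, so $\gamma^*$ is unique, and $U$ is then unique by Lipschitz uniqueness of the initial value problem.

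Finally, for perspective on the literature the paper defers to: the classical proof (and very likely Zwirner's own) avoids shooting entirely by treating $(\gamma,U)$ simultaneously as a fixed point of
\begin{equation*}
T[U](w)=u_1+(u_2-u_1)\,\frac{\int_{w_1}^{w}F(U(t),t)\,dt}{\int_{w_1}^{w_2}F(U(t),t)\,dt},
\end{equation*}
whose denominator is bounded below by $\int_{w_1}^{w_2}p\,dt>0$. The bounds $p\leq F\leq q$ make $T$ a compact self-map of a convex set of nondecreasing continuous functions joining $u_1$ to $u_2$, Schauder's theorem gives a fixed point, and $\gamma$ is read off from the endpoint condition. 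That route needs neither the extension of $F$ outside $R$ nor Kneser's theorem, which is why it is the standard one for this class of problems.
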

\vskip .05 cm
The link between the problem (\ref{3_1})-(\ref{6_1}) and the problem (\ref{1_6}), (\ref{2_6}) is established in the theorems below using the following elementary

\begin{lemma}
Let $w(\xxx)\in C^0(\bar\O)$ and

\begin{equation*}
\min_{\bar\O}w(\xxx)=w_1\leq w(\xxx)\leq w(\xxx)\leq w_2=\max_{\bar\O}w(\xxx).
\end{equation*}
Assume $\ff(t),\ \gg(t)\in C^0([w_1,w_2])$, then, if

\begin{equation}
\label{2_9}
\ff(w(\xxx))=\gg(w(\xxx)),\quad \xxx\in\bar\O,
\end{equation}
we have, for all $w\in[w_1,w_2]$,

\begin{equation*}
\ff(w)=\gg(w).
\end{equation*}
\end{lemma}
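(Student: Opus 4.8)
The plan is to exploit the fact that the hypothesis (\ref{2_9}) asserts the equality of the two continuous functions $\ff$ and $\gg$ only on the range $w(\bar\O)$ of $w$, and then to show that this range is in fact all of $[w_1,w_2]$. Once surjectivity onto the full interval is in hand, the conclusion is immediate: given any $\bar w\in[w_1,w_2]$, I would choose a point $\xxx_0\in\bar\O$ with $w(\xxx_0)=\bar w$ and read off
\[
\ff(\bar w)=\ff(w(\xxx_0))=\gg(w(\xxx_0))=\gg(\bar w),
\]
where the middle equality is precisely (\ref{2_9}). Since $\bar w$ was arbitrary, this gives $\ff=\gg$ on $[w_1,w_2]$.

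The only substantive step, therefore, is to verify that $w(\bar\O)=[w_1,w_2]$. First I would note that $\bar\O$ is compact, so the continuous function $w$ attains its extrema $w_1=\min_{\bar\O}w$ and $w_2=\max_{\bar\O}w$ at points of $\bar\O$; these two values thus belong to $w(\bar\O)$. Next, since $\bar\O$ is connected (being the closure of the domain $\O$), its continuous image $w(\bar\O)$ is a connected subset of the real line, that is, an interval. An interval containing both $w_1$ and $w_2$ must contain the whole segment $[w_1,w_2]$, and since by hypothesis $w$ never leaves this segment we conclude $w(\bar\O)=[w_1,w_2]$. Equivalently, this is just the intermediate value theorem: $w$ assumes every value between its minimum and its maximum.

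The argument is essentially free of obstacles; the one point that must be made explicit is the connectedness of $\bar\O$, which is what guarantees that no value in $(w_1,w_2)$ is skipped. Were $\O$ allowed to be disconnected, the range of $w$ could be a proper subset of $[w_1,w_2]$ and the identity $\ff=\gg$ would fail off that subset, so connectedness (implicit in the physical setting) is the load-bearing hypothesis. I would mention it briefly rather than dwell on it, in keeping with the elementary nature of the statement.
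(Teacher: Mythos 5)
Your proof is correct and follows essentially the same route as the paper's: pick a preimage $\xxx^*\in\bar\O$ of an arbitrary $w^*\in[w_1,w_2]$ and apply the hypothesis $\ff(w(\xxx^*))=\gg(w(\xxx^*))$. The only difference is that the paper simply asserts the existence of such a preimage, whereas you justify it (compactness for attainment of the extrema, connectedness of $\bar\O$ plus the intermediate value theorem for surjectivity onto $[w_1,w_2]$) --- a worthwhile addition, since connectedness of $\O$ is indeed the load-bearing hypothesis and is never stated explicitly in the paper.
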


\begin{proof}
Assume $w^*\in [w_1,w_2]$. There exists $\xxx^*\in\bar\O$ such that $w(\xxx^*)=w^*$. Hence, by (\ref{2_9}),

\begin{equation}
\label{4_9}
\ff(w^*)=\ff(w(\xxx^*))=\gg(w(x^*))=\gg(w^*).
\end{equation}
\end{proof}

\begin{theorem}
Let $w_2>w_1$ and $R=\{(u,w);\ u_1\leq u\leq u_2,\ w_1\leq w\leq w_2\}$. Assume $b(u,w)\in C^0(R)$ and

\begin{equation}
\label{1_10}
a(u,w), \quad b(u,w)>0\quad \hbox{in}\quad R.
\end{equation}
Let $(u(\xxx),w(\xxx))$ be a functional solution of the problem

\begin{equation}
\label{1_11}
\na\cdot(a(u,w)\na u)=0\quad\9
\end{equation} 

\begin{equation}
\label{2_11}
u=u_1\1,\quad u=u_2\2,\quad\frac{\pa u}{\pa n}=0\3
\end{equation} 

\begin{equation}
\label{3_11}
\na\cdot(b(u,w)\na w)=0\quad\9
\end{equation} 

\begin{equation}
\label{4_11}
w=w_1\1,\quad w=w_2\2,\quad\frac{\pa w}{\pa n}=0\3,
\end{equation} 
then the function $U(w)$ entering in the definition of functional solution solves the two point-problem

\begin{equation}
\label{5_11}
U'(w)=\frac{b(U(w),w)}{a(U(w),w)}
\end{equation}

\begin{equation}
\label{6_11}
U(w_1)=u_1,\quad U(w_2)=u_2,\quad w_2>w_1.
\end{equation}
\end{theorem}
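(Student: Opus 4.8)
The plan is to linearize both divergence-form equations by a Kirchhoff-type substitution, recognize that the two resulting potentials both solve the linear mixed problem (\ref{1_3}) and hence are proportional to $z$, use the Lemma to convert the resulting identity in $\xxx$ into an identity in the scalar variable $w$, and finally differentiate.

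First I would exploit the definition of functional solution. Writing $u(\xxx)=U(w(\xxx))$ and applying the chain rule gives $\na u=U'(w)\na w$, so (\ref{1_11}) becomes $\na\cdot\bigl(a(U(w),w)U'(w)\na w\bigr)=0$, while (\ref{3_11}) reads $\na\cdot\bigl(b(U(w),w)\na w\bigr)=0$. Along the solution both coefficients are functions of $w$ alone, so I introduce the primitives
\[
\Phi(w)=\int_{w_1}^{w}a(U(s),s)U'(s)\,ds,\qquad X(w)=\int_{w_1}^{w}b(U(s),s)\,ds,
\]
which lie in $C^1([w_1,w_2])$ since $U\in C^1$ and the coefficients are continuous along the solution curve. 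Because $\na\Phi(w(\xxx))=a(U(w),w)U'(w)\na w$ and $\na X(w(\xxx))=b(U(w),w)\na w$, the two equations collapse to $\D\Phi(w(\xxx))=0$ and $\D X(w(\xxx))=0$ in $\O$.

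Next I would read off the boundary data of the harmonic functions $\Phi(w(\xxx))$ and $X(w(\xxx))$: on $\G_1$ (where $w=w_1$) both vanish; on $\G_2$ (where $w=w_2$) they equal the constants $\Phi(w_2)$ and $X(w_2)$; and on $\G_3$ the chain rule gives $\frac{\pa}{\pa n}\Phi(w(\xxx))=\Phi'(w)\frac{\pa w}{\pa n}=0$, likewise for $X$. Here $X(w_2)=\int_{w_1}^{w_2}b\,ds>0$ because $b>0$ and $w_2>w_1$, so $X(w(\xxx))/X(w_2)$ solves exactly the linear problem (\ref{1_3}); by uniqueness it equals $z(\xxx)$. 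By the same uniqueness argument $\Phi(w(\xxx))=\Phi(w_2)z(\xxx)$. Eliminating $z$ yields
\[
\Phi(w(\xxx))=\g\,X(w(\xxx)),\qquad \g:=\frac{\Phi(w_2)}{X(w_2)},\qquad \text{for all }\xxx\in\bar\O.
\]

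Finally I would invoke the Lemma. The maximum principle for (\ref{3_11})--(\ref{4_11}) (valid since $b>0$ on the compact $R$) gives $w_1\le w(\xxx)\le w_2$ with the extreme values attained on $\G_1$ and $\G_2$, so the hypothesis $\min_{\bar\O}w=w_1$, $\max_{\bar\O}w=w_2$ of the Lemma holds. Applying it with $\ff(w)=\Phi(w)$ and $\gg(w)=\g X(w)$ (both continuous in $w$ and agreeing on the range of $w$) upgrades the identity to $\Phi(w)=\g X(w)$ for every $w\in[w_1,w_2]$. Differentiating and using $a,b>0$ gives
\[
U'(w)=\g\,\frac{b(U(w),w)}{a(U(w),w)},
\]
which is equation (\ref{5_11}) with the constant $\g$ of the associated two-point problem (\ref{1_6}); the end conditions $U(w_1)=u_1$, $U(w_2)=u_2$ are immediate from the definition of functional solution together with the boundary data on $\G_1,\G_2$. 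The step I expect to be the main obstacle is precisely the passage from the pointwise identity in $\xxx$ to the identity in the scalar variable $w$: without the Lemma (and the maximum principle guaranteeing that $w$ sweeps out the whole interval $[w_1,w_2]$) the two potentials would be known to coincide only on the image of $w$, which would not suffice to differentiate and recover the ordinary differential equation.
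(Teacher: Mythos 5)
Your proposal is correct and is essentially the paper's own proof: you introduce the same Kirchhoff-type primitives (the paper's $\theta$ and $\psi$ are your $\Phi$ and $X$), identify both transformed potentials as multiples of the solution $z$ of (\ref{1_3}), apply the Lemma to pass from the identity in $\xxx$ to an identity on all of $[w_1,w_2]$, and differentiate. Your two refinements — observing that the constant $\g$ must appear in the resulting equation (the statement of (\ref{5_11}) omits it, though the paper's proof and the companion problem (\ref{1_6}) include it), and that the maximum principle together with the boundary data guarantees $w$ attains $w_1$ and $w_2$ so the Lemma's hypothesis is met — are consistent with, and slightly more careful than, the paper's argument.
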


\begin{proof}
Let $(u(\xxx),w(\xxx))$be a functional solution of (\ref{1_11})-(\ref{4_11}). By (\ref{3_11}) the maximum principle \cite{PW} implies

\begin{equation}
\label{1_12}
w_1\leq w(\xxx)\leq w_2.
\end{equation}
Moreover, by assumption $u(\xxx)=U(w(\xxx))$.
Define

\begin{equation}
\label{1_13}
\theta(w)=\int_{w_1}^{w} a(U(t),t)U'(t)dt,\quad \p(w)=\int_{w_1}^{w} b(U(t),t)dt
\end{equation}
and

\begin{equation}
\label{2_13}
 \Theta(\xxx)=\theta(w(\xxx)),\quad \Psi(\xxx)=\p(w(\xxx)).
\end{equation}
We have $\na\Theta=a(u,w)\na u,\quad \na\Psi=b(u,w)\na w$. On the other hand, $(u(\xxx),w(\xxx))$ solves (\ref{1_11})-(\ref{4_11}), thus we have

\begin{equation*}
\D\Theta=0\ \9,\quad \Theta=0\ \1
\end{equation*}

\begin{equation*}
 \Theta=\theta(w_2)\ \2,\quad \frac{\pa\Theta}{\pa n}=0\ \3
\end{equation*}

\begin{equation*}
\D\Psi=0\ \9,\quad \Psi=0\ \1
\end{equation*}

\begin{equation*}
 \Psi=\psi(w_2)\ \2,\quad \frac{\pa\Theta}{\pa n}=0\ \3.
\end{equation*}
By (\ref{1_10}) we have $\psi(w_2)\neq 0$. Let $z(\xxx)$ be the solution of the problem (\ref{1_3}). We obtain $\Theta(\xxx)=\theta(w_2) z(\xxx)$ and $\Psi(\xxx)=\psi(w_2) z(\xxx)$. Hence

\begin{equation}
\label{1_15}
 \Theta(\xxx)=\g\Psi(\xxx),\quad \g=\frac{\theta(w_2)}{\psi(w_2)}.
\end{equation}
From (\ref{1_13}), (\ref{2_13}) and (\ref{1_15}) we have

\begin{equation}
\label{3_15}
 \int_{w_1}^{w(\xxx)}a(U(t),t)U'(t)dt=\g\int_{w_1}^{w(\xxx)}b(U(t),t)dt.
\end{equation}
Applying Lemma 1.4 with 

\begin{equation*}
\ff(t)= \int_{w_1}^{t}a(U(\ep),\ep)U'(\ep)d\ep, \quad \gg(t)=\int_{w_1}^{t}b(U(\ep),\ep)d\ep
\end{equation*}
by (\ref{3_15}) we have

\begin{equation*}
 \int_{w_1}^{w}a(U(t),t)U'(t)dt =\g\int_{w_1}^{w}b(U(t),t)dt.
\end{equation*}
Hence

\begin{equation*}
 a(U(w),w)U'(w)=\g b(U(w),w)
\end{equation*}
and (\ref{5_11}) holds.
Moreover, also the boundary conditions (\ref{6_11}) are verified.
\end{proof}
Vice-versa we have

\begin{theorem}
Assume (\ref{1_10}), then to every solution $U(w)$ of class $C^1([w_1,w_2])$ of the problem

\begin{equation}
\label{1_17}
U'(w)=\g\frac{b(U(w),w)}{a(U(w),w)},\quad U(w_1)=u_1,\quad U(w_2)=u_2,\quad w_2>w_1
\end{equation}
there corresponds a functional solution of the problem (\ref{1_11})-(\ref{4_11}).
\end{theorem}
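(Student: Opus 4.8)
The plan is to reverse the construction carried out in the proof of Theorem 2.4. Given the $C^1$ solution $U(w)$ of (\ref{1_17}), I would first build the concentration $w(\xxx)$ by inverting a suitable change of variable, then set $u(\xxx)=U(w(\xxx))$ and check that the resulting pair solves (\ref{1_11})--(\ref{4_11}).

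First I would introduce $\psi(w)=\int_{w_1}^{w} b(U(t),t)\,dt$. Because $b>0$ in $R$ by (\ref{1_10}) and $U\in C^1$, we have $\psi'(w)=b(U(w),w)>0$, so $\psi$ is a strictly increasing $C^1$ bijection of $[w_1,w_2]$ onto $[0,\psi(w_2)]$ with $\psi(w_1)=0$. Let $z(\xxx)$ solve the linear mixed problem (\ref{1_3}); by the maximum principle $0\leq z\leq 1$. I would then define
\begin{equation*}
\Psi(\xxx)=\psi(w_2)z(\xxx),\qquad w(\xxx)=\psi^{-1}(\Psi(\xxx)),\qquad u(\xxx)=U(w(\xxx)),
\end{equation*}
so that $\Psi$ is harmonic, $0\leq\Psi\leq\psi(w_2)$, and consequently $w_1\leq w(\xxx)\leq w_2$, keeping $(u(\xxx),w(\xxx))$ inside $R$.

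Next I would verify the two equations by observing that the divergence-form operators collapse to $\D$ under these changes of variable. Since $\Psi(\xxx)=\psi(w(\xxx))$, the chain rule gives $\na\Psi=b(u,w)\na w$, whence $\na\cdot(b(u,w)\na w)=\D\Psi=0$, which is (\ref{3_11}); the conditions (\ref{4_11}) follow from $\psi^{-1}(0)=w_1$, $\psi^{-1}(\psi(w_2))=w_2$, and $\pa\Psi/\pa n=b(u,w)\,\pa w/\pa n=0$ together with $b>0$. For the temperature I would set $\theta(w)=\int_{w_1}^{w}a(U(t),t)U'(t)\,dt$ and use the ODE (\ref{1_17}) in the form $a(U(t),t)U'(t)=\g\,b(U(t),t)$, which integrates to $\theta(w)=\g\,\psi(w)$. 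Hence $\Theta(\xxx):=\theta(w(\xxx))=\g\Psi(\xxx)$ is harmonic; since $\na\Theta=a(u,w)U'(w)\na w=a(u,w)\na u$, we obtain $\na\cdot(a(u,w)\na u)=\D\Theta=0$, i.e. (\ref{1_11}), while (\ref{2_11}) follows from $U(w_1)=u_1$, $U(w_2)=u_2$, and $\pa u/\pa n=U'(w)\,\pa w/\pa n=0$.

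The construction is then complete: $u(\xxx)=U(w(\xxx))$ with $U\in C^1([w_1,w_2])$ exhibits $(u(\xxx),w(\xxx))$ as a functional solution in the sense of the Definition. I expect the only delicate point to be the invertibility of $\psi$, which is precisely where the positivity hypothesis $b>0$ of (\ref{1_10}) is indispensable; once $w$ is defined in this way, both field equations reduce to the harmonicity of $\Psi$ and $\Theta$, so no regularity beyond $U\in C^1$ and the smoothness of $z$ is required to make sense of the equations in divergence form.
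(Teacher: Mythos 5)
Your proof is correct and follows essentially the same route as the paper: both hinge on the substitution $\psi(w)=\int_{w_1}^{w}b(U(t),t)\,dt$, whose invertibility under (\ref{1_10}) reduces the $w$-equation to the linear mixed problem (\ref{1_3}), after which $u(\xxx)=U(w(\xxx))$ satisfies (\ref{1_11}) because the ODE gives $a(u,w)\na u=\g\, b(u,w)\na w$ pointwise. The only difference is presentational: you build $w=\psi^{-1}(\psi(w_2)z)$ directly from the solution $z$ of (\ref{1_3}) and then verify both equations, whereas the paper first poses the nonlinear problem (\ref{1_18}), (\ref{2_18}) and solves it by the same transformation.
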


\begin{proof}
Let $U(t)$ be a solution of (\ref{1_17}) and consider the non-linear elliptic problem

\begin{equation}
\label{1_18}
\na\cdot(b(U(w),w)\na w)=0\quad\9
\end{equation} 

\begin{equation}
\label{2_18}
w=w_1\1,\quad w=w_2\2,\quad\frac{\pa w}{\pa n}=0\3.
\end{equation}
There exists one and only one solution of (\ref{1_18}), (\ref{2_18}). For, let us define

\begin{equation*}
\p(w)=\int_{w_1}^w b(U(t),t)dt.
\end{equation*}
By (\ref{1_10}) $\p$ maps one-to-one $[w_1,w_2]$ onto $[0,\p(w_2)]$. Hence, if we define $\f(\xxx)=\p(w(\xxx))$, the problem (\ref{1_18}), (\ref{2_18}) can be restated as

\begin{equation}
\label{1_19}
\D\f=0\ \9,\quad \f=0\ \1
\end{equation}

\begin{equation}
\label{2_19}
 \f=\p(w_2)\ \2,\quad \frac{\pa\f}{\pa n}=0\ \3.
\end{equation}
By (\ref{1_3}) the solution of (\ref{1_19}) and (\ref{2_19}) exists and is unique and $w(\xxx)=\p^{-1}(\f(\xxx))$ gives the unique solution of (\ref{1_18}), (\ref{2_18}). Define now

\begin{equation*}
 u(\xxx)=U(w(\xxx)).
\end{equation*}
Thus (\ref{1_18}) can be written

\begin{equation*}
\na\cdot(b(u,w)\na w)=0\quad\9.
\end{equation*} 
Setting $w=w(\xxx)$ in (\ref{1_17}) we obtain

\begin{equation*}
 a(U(w(\xxx)),w(\xxx))U'(w(\xxx))=\g b(U(w(\xxx)),w(\xxx))
\end{equation*}
and also

\begin{equation*}
 a(U(w(\xxx)),w(\xxx))U'(w(\xxx))\na w=\g b(U(w(\xxx)),w(\xxx))\na w
\end{equation*}
and, by (\ref{1_18}),

\begin{equation*}
\na\cdot(a(u,w)\na u)=0\quad\9.
\end{equation*} 
On the other hand, the functions $(u(\xxx),w(\xxx))$ just defined satisfies also the boundary conditions (\ref{2_11}) and (\ref{4_11}).
\end{proof}
This proof shows that the problem (\ref{1_11})-(\ref{4_11}) is solvable (i) if we can solve the linear problem (\ref{1_19}), (\ref{2_19}), which in turn is immediately reducible to (\ref{1_3}) which contains the ``geometric'' part, (ii) a solution of problem (\ref{5_11}), (\ref{6_11}) is known. This last solution contains the non-linear features of the original problem (\ref{3_1})-(\ref{6_1}) if we limit ourselves to consider functional solutions.

\vskip .3cm
 The uniqueness of the functional solutions of problem (\ref{3_1})-(\ref{6_1}) is also a consequence of the uniqueness for problem (\ref{1_1}), (\ref{2_1}). In fact we have

\begin{theorem}
Let (\ref{1_10}) hold. If the problem

\begin{equation}
\label{1_21}
U'(w)=\g\frac{b(U(w),w)}{a(U(w),w)},\quad U(w_1)=u_1,\quad U(w_2)=u_2
\end{equation} 
has a unique solution also the corresponding functional solution of

\begin{equation}
\label{5_21}
\na\cdot(a(u,w)\na u)=0\quad\9
\end{equation} 

\begin{equation}
\label{6_21}
u=u_1\1,\quad u=u_2\2,\quad\frac{\pa u}{\pa n}=0\3
\end{equation} 

\begin{equation}
\label{7_21}
\na\cdot(b(u,w)\na w)=0\quad\9
\end{equation} 

\begin{equation}
\label{8_21}
w=w_1\1,\quad w=w_2\2,\quad\frac{\pa w}{\pa n}=0\3
\end{equation} 
is unique in the class of functional solutions.
\end{theorem}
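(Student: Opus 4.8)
The plan is to reduce the uniqueness of the functional solution to two uniqueness facts already secured: the assumed uniqueness of the two-point problem (\ref{1_21}), and the uniqueness of the linearized elliptic problem established in the proof of the converse theorem. First I would take two functional solutions $(u^{(1)}(\xxx),w^{(1)}(\xxx))$ and $(u^{(2)}(\xxx),w^{(2)}(\xxx))$ of (\ref{5_21})--(\ref{8_21}), with associated functions $U^{(1)}(w),U^{(2)}(w)\in C^1([w_1,w_2])$ satisfying $u^{(i)}(\xxx)=U^{(i)}(w^{(i)}(\xxx))$. The maximum principle applied to (\ref{7_21}) gives $w_1\leq w^{(i)}(\xxx)\leq w_2$, so each $U^{(i)}$ is genuinely defined on $[w_1,w_2]$ and the two-point problem is meaningful for both.

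By the theorem that links a functional solution to the two-point problem (the one concluding with (\ref{5_11}), (\ref{6_11})), each $U^{(i)}$, together with a constant $\g^{(i)}$, solves (\ref{1_21}). Since (\ref{1_21}) is assumed to have a unique solution in the pair $(\g,U)$, we conclude $\g^{(1)}=\g^{(2)}$ and $U^{(1)}=U^{(2)}=:U$. Thus both functional solutions are built from one and the same profile $U$, and it only remains to prove $w^{(1)}=w^{(2)}$.

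To this end I would observe that, once $U$ is fixed, each $w^{(i)}$ solves the elliptic problem (\ref{1_18}), (\ref{2_18}) with coefficient $b(U(w),w)$, namely $\na\cdot(b(U(w),w)\na w)=0$ in $\O$ with the boundary data of (\ref{4_11}). The uniqueness argument of the converse theorem then applies verbatim: setting $\p(w)=\int_{w_1}^{w}b(U(t),t)\,dt$, assumption (\ref{1_10}) makes $\p$ a strictly increasing bijection of $[w_1,w_2]$ onto $[0,\p(w_2)]$, so each $\f^{(i)}(\xxx)=\p(w^{(i)}(\xxx))$ solves the linear mixed problem (\ref{1_19}), (\ref{2_19}), which by (\ref{1_3}) admits a unique solution. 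Hence $\f^{(1)}=\f^{(2)}$, and inverting $\p$ yields $w^{(1)}=w^{(2)}$. Finally $u^{(i)}=U(w^{(i)})$ forces $u^{(1)}=u^{(2)}$, so the functional solution is unique.

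I expect the only delicate point to be the second step, where the two a priori distinct profiles $U^{(1)},U^{(2)}$ must be collapsed into one. This is precisely where the full force of the hypothesis is used: uniqueness of the two-point problem is exactly what is needed to identify the two profiles, and without it the argument breaks down, since distinct profiles would in general generate distinct elliptic problems for $w$. Once the common $U$ is in hand, the identification of $w$ is a routine consequence of the Kirchhoff-type change of variable $\p$ and the uniqueness for the purely linear problem (\ref{1_3}), both already established.
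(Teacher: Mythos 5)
Your proposal is correct and follows essentially the same route as the paper's own proof: first collapse the two profiles $U^{(1)},U^{(2)}$ into one via the assumed uniqueness for (\ref{1_21}) (the paper also invokes, implicitly, the earlier theorem linking functional solutions to the two-point problem), then identify $w^{(1)}=w^{(2)}$ by passing to the Kirchhoff-type variable $\p(w)=\int_{w_1}^{w}b(U(t),t)\,dt$ and using uniqueness of the linear mixed problem (\ref{1_19}), (\ref{2_19}), finally recovering $u^{(1)}=u^{(2)}$ from $u^{(i)}=U(w^{(i)})$. The only cosmetic difference is that the paper carries two functions $\p^{*},\p^{**}$ and appeals to its elementary lemma before inverting, whereas you work with the single $\p$ from the start; the substance is identical.
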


\begin{proof}
Let, by contradiction, $(u^*,w^*)$, $(u^{**},w^{**})$ be two functional solutions of problem (\ref{5_21})-(\ref{8_21}). We have

\begin{equation}
\label{1_22}
u^*(\xxx)=U^*(w^*(\xxx)),\quad u^{**}(\xxx)=U^{**}(w^{**}(\xxx)).
\end{equation} 
$U^*(w)$ and $U^{**}(w)$ are both solutions of the problem (\ref{1_21}). Thus $U^*(w)=U^{**}(w)$. Let us define

\begin{equation*}
\p^*(w)=\int_{w_1}^w b(U^*(t),t)dt,\quad \p^{**}(w)=\int_{w_1}^w b(U^{**}(t),t)dt
\end{equation*} 
and

\begin{equation*}
\Psi^*(\xxx)=\psi^*(w^*(\xxx)),\quad \Psi^{**}(\xxx)=\psi^{**}(w^{**}(\xxx)).
\end{equation*} 
We have $\p^*(w_2)=\p^{**}(w_2)$, therefore $\Psi^*(\xxx)$ and $\Psi^{**}(\xxx)$ are both solutions of the problem

\begin{equation}
\label{3_23}
\D\f=0\ \9,\quad \f=0\ \1,
\end{equation}

\begin{equation}
\label{4_23}
\f= \p^*(w_2)\ \2,\quad \frac{\pa\f}{\pa n}=0\ \3
\end{equation}
which has a unique solution. Hence $\Psi^*(\xxx)=\Psi^{**}(\xxx)$ and we have  $\p^*(w)=\p^{**}(w)$ by Lemma 1.3. This in turn implies

\begin{equation}
\label{5_23}
w^*(\xxx)=(\p^*)^{-1}(\f(\xxx))=(\p^{**})^{-1}(\f(\xxx))=w^{**}(\xxx)
\end{equation}
and

\begin{equation}
\label{6_23}
u^*(\xxx)=U^*(w^*(\xxx))=U^{**}(w^{**}(\xxx))=u^{**}(\xxx).
\end{equation}

\end{proof}
We summarize our results in the following

\begin{theorem}
Let $\frac{b(U,w)}{a(U,w)}$ be of class $C^1$ in the rectangle $R=\{w_1\leq w\leq w_2,\ u_1\leq U\leq u_2\}$, $w_1<w_2$. Assume (\ref{1_10}) and that there exist two functions $q(w),\ p(w)\in L^1(w_1,w_2)$ such that

\begin{equation*}
0\leq p(w)\leq \frac{b(U,t)}{a(U,t)}\leq q(w),\quad \int_{w_1}^{w_2}p(t)dt>0.
\end{equation*}
Then the problem (\ref{3_1})-(\ref{6_1}) has at least one functional solution. Moreover, if $\frac{b(U,w)}{a(U,w)}$ satisfies a Lipschitz condition in $R$ with respect to $U$ the solution of (\ref{3_1})-(\ref{6_1}) is unique in the class of functional solutions.
\end{theorem}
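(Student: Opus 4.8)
The plan is to obtain this statement as a direct synthesis of the three results already established: the existence/uniqueness theorem of Zwirner quoted above, the converse construction (which builds a functional solution from a $C^1$ solution of the two-point problem), and the uniqueness-transfer theorem. Essentially all that must be checked is that the hypotheses listed here are precisely those needed to invoke Zwirner's theorem with the choice $F(U,w)=\frac{b(U,w)}{a(U,w)}$; once the ODE side is settled, the PDE conclusions are read off from the two construction theorems.

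For the existence half, I would first verify the hypotheses of Zwirner's theorem for $F=b/a$. By (\ref{1_10}) we have $a,b>0$ on $R$, and since $b/a\in C^1(R)$ the function $F$ is continuous in $U$ and continuous (hence measurable) in $w$ throughout $R$. The assumed inequalities $0\le p(w)\le F(U,w)\le q(w)$ with $p,q\in L^1(w_1,w_2)$, $p\ge 0$ and $\int_{w_1}^{w_2}p(t)\,dt>0$ reproduce verbatim the bounds demanded by Zwirner's theorem. That theorem therefore supplies a real number $\g$ and a solution $U(w)$ of (\ref{3_7}) with this $F$, i.e. of (\ref{1_17}); moreover, because $F\in C^1(R)$ the regularity clause of the theorem gives $U\in C^2([w_1,w_2])$, in particular $U\in C^1([w_1,w_2])$. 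Feeding this $C^1$ solution into the converse construction theorem — whose standing hypothesis (\ref{1_10}) holds — produces a functional solution $(u(\xxx),w(\xxx))$ of (\ref{1_11})-(\ref{4_11}), which are exactly the equations (\ref{3_1})-(\ref{6_1}). This yields at least one functional solution.

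For the uniqueness half, I would add the extra hypothesis that $b/a$ is Lipschitz in $U$ on $R$. This is precisely the condition under which the last assertion of Zwirner's theorem guarantees that the two-point problem (\ref{1_21}) has a unique solution. With (\ref{1_10}) again in force, the uniqueness-transfer theorem then converts this into uniqueness of the functional solution of (\ref{5_21})-(\ref{8_21}), i.e. of (\ref{3_1})-(\ref{6_1}), within the class of functional solutions.

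There is no genuine analytic difficulty here: the only points requiring care are bookkeeping ones. One must confirm that the $C^1$ regularity of $b/a$ is strong enough to trigger both the existence and the regularity clauses of Zwirner's theorem, so that the resulting $U$ is actually of class $C^1$ as the converse construction requires, and that the separate Lipschitz hypothesis is what activates the uniqueness branch. No estimates beyond those already packaged in the three cited theorems are needed.
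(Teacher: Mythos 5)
Your proposal is correct and is essentially the paper's own argument: the paper states this theorem as a summary (with no written proof) of the three preceding results, and your synthesis --- Zwirner's theorem applied to $F=\frac{b}{a}$ yielding a $C^2$ (hence $C^1$) solution $U$ and a value of $\g$, the converse construction theorem turning $U$ into a functional solution, and the uniqueness-transfer theorem handling the Lipschitz case --- is exactly the intended chain of reasoning. The bookkeeping points you flag (regularity of $U$ sufficient for the construction, and the Lipschitz hypothesis activating the uniqueness branch) are precisely the ones that need checking, and you check them correctly.
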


\noindent{\bf Compliance with ethical standard}
\vskip .3cm
\noindent{\bf Conflict of interest.} The author declares that he has no conflicts of interest.

\bibliographystyle{amsplain}

\end{document}